\newcommand{\Z}{{\mathbb Z}}
\renewcommand{\phi}{\varphi}
\theoremstyle{plain}
    \newtheorem{thm}{Theorem}[section]
    \newtheorem{lemma}[thm]{Lemma}
\theoremstyle{definition}
    \newtheorem{defn}[thm]{Definition}
    \newtheorem{ex}[thm]{Example}
\DeclareMathOperator{\id}{id}
\title{Non-homogeneous extensions of Cantor minimal systems}
\author[R.J. Deeley, I.F. Putnam, K.R. Strung]
{Robin J. Deeley \and
Ian F. Putnam \and
Karen R. Strung}
\address{Department of Mathematics,
University of Colorado Boulder
Campus Box 395,
Boulder, CO 80309-0395, USA }
\email{robin.deeley@gmail.com}
\address{Department of Mathematics and Statistics,
University of Victoria,
Victoria, B.C., Canada V8W 3R4} 
\email{ifputnam@uvic.ca}
\address{Institute of Mathematics, Czech Academy of Sciences, \v{Z}itn\'a 25, 115 67 Prague, Czech Republic}
\email{strung@math.cas.cz}
\thanks{RJD is currently funded by NSF Grant DMS 2000057 and was previously funded by Simons Foundation Collaboration Grant for Mathematicians number 638449. KRS is currently funded by GA\v{C}R project 20-17488Y and \mbox{RVO: 67985840} and part of this work was carried out while funded by Sonata 9 NCN grant 2015/17/D/ST1/02529 and a Radboud Excellence Initiative Postdoctoral Fellowship. IFP is supported in part by an NSERC Discovery Grant.}
\date{\today}
\subjclass[2010]{37B05, 46L35, 46L85, 19K99}
\keywords{minimal dynamics}
\begin{document}

\begin{abstract}Floyd gave an example of a minimal dynamical 
system which was an extension  of an odometer and the
 fibres of the associated factor map were either 
 singletons or intervals.  Gjerde 
 and Johansen showed that the odometer could be replaced
 by any Cantor minimal system. Here, we show further that 
 the intervals can be generalized to cubes of arbitrary dimension
 and to attractors of certain iterated function systems.
 We discuss applications.
 \end{abstract}
 
\maketitle

\section{Introduction and statement of results}

We consider dynamical systems  consisting
 of a compact space, $X$, together with 
a homeomorphism, $\varphi: X \rightarrow X$.
We say that such a system is \emph{minimal} if the 
only closed sets $Y \subseteq X$ such that $\varphi(Y) = Y$ are 
$Y = X$ and $Y = \emptyset$. Equivalently, for every $x$ in $X$, its orbit,
$\{ \varphi^{n}(x) \mid n \in \Z \}$, is dense in $X$.

There are a number of examples of such systems: rotation of the 
circle through an angle which is an irrational multiple 
of $2 \pi$, odometers and certain diffeomorphisms 
of spheres of odd dimension $d \geq 3$ constructed by Fathi and Herman \cite{FatHer:Diffeo}.

All of these examples share one common feature: the spaces 
involved are 
homogeneous. There are several ways to make this more precise, 
but one simple way would be to observe that the group
 of homeomorphisms 
acts transitively on the points.

In \cite{floyd1949}, Floyd gave the first example of a minimal system 
where the space is not homogeneous in this (or an even stronger) sense. 
Floyd began with the $3^{\infty}$-odometer, $(X, \varphi)$, 
which is a minimal system with $X$ compact, metrizable, 
totally disconnected and without isolated points. Any two such 
spaces are homeomorphic and we refer to such a space as a 
\emph{Cantor
set}. Floyd then constructed another minimal
system, $(\tilde{X}, \tilde{\varphi})$, together with a continuous 
surjection $ \pi: \tilde{X} \rightarrow X$ 
satisfying $\pi \circ \tilde{\varphi} = \varphi \circ \pi$. 
In general, we refer to such a map as a \emph{factor map},
 we say 
that $(X, \varphi)$ is a \emph{factor} of $(\tilde{X}, \tilde{\varphi})$ and 
that $(\tilde{X}, \tilde{\varphi})$ is an 
\emph{extension} of $(X, \varphi)$. In Floyd's example, 
some points $x$ in $X$ have $\pi^{-1}\{ x \}$ homeomorphic
to the unit interval, $[0, 1]$, while for others, it is a single
 point.
It is then quite easy to see, using the fact that $X$ is 
totally disconnected, 
 that the space $\tilde{X}$ has some
connected components which are single points and 
some homeomorphic to
the interval.

This example has been generalized in several ways (for example \cite{auslander1959, MR956049, FloGjeJohSys, HadJoh:AuslanderSys}). In Floyd's 
example, the points $x$ with $\pi^{-1}\{ x \}$ infinite all
lie in a single orbit. Haddad and Johnson in \cite{HadJoh:AuslanderSys} showed that the set
of such $x$ could be much larger and even have positive measure
under the unique invariant measure for $(X, \varphi)$.
More importantly for our purposes, Gjerde and Johansen \cite{FloGjeJohSys} showed
that the $3^{\infty}$-odometer could be replaced with any 
minimal system, $(X, \varphi)$, with $X$ a Cantor set. Their
 principal tool was the Bratteli--Vershik model 
 for such systems \cite[Chapters 4 and 5]{Put:BookMinCantorSys}.
 We will describe this in more detail in Section \ref{Sec:ConstProofs}.
 
Our aim here is to show that the interval, $[0,1]$,  appearing as 
$\pi^{-1}\{ x\}$, can be replaced by more complicated spaces. We are  
particularly interested in the case of the 
$n$-dimensional cube (that is, $[0,1]^{n}$), for any 
positive integer $n$. 

Although it is natural to generalize to more complicated spaces,
 let us explain briefly why we want such a result 
 in the specific case of $[0,1]^n$.
 The Elliott program aims to show that 
 a broad class of $C^{*}$-algebras may be classified up 
 to isomorphism by their $K$-theory \cite{Ell:classprob}. One very useful
 way of constructing $C^{*}$-algebras is via groupoids \cite{MR584266} and
  it becomes a natural question: which $C^{*}$-algebras in 
  the Elliott scheme
 can be realized via a groupoid construction? 
 In view of the classification results themselves, this amounts
 to constructing groupoids whose associated $C^{*}$-algebras
 are classifiable and have some prescribed $K$-theory.
If one begins with a minimal action of the integers
on a Cantor set, it is known that the $K_{0}$-group is a simple 
acyclic dimension
group and $K_{1}$ is the integers \cite{MR1194074}.
 Moreover, any such
$K$-theory can be realized from such a system \cite{MR1194074}. 

In another direction, if one takes a minimal action, $\varphi$, 
 of the 
integers on some space $X$ and considers a closed, 
non-empty  subset $Y \subseteq X$ such that $Y$ meets each orbit
at most once, one can construct the associated ``orbit-breaking groupoid'': 
the equivalence relation where the classes are either the original orbits of 
$\varphi$ which do not meet $Y$ 
or the half-orbits, split at $Y$. The change in $K$-theory passing from the 
crossed product $C^{*}$-algebra to the orbit-breaking subalgebra
can be computed, essentially in terms of the $K$-theory
 of the space $Y$ (see \cite{Put:Excision} for details).

Marrying these two ideas would seem to generate many interesting
groupoids, except that the choices for $K^{*}(Y)$, where 
$Y$ is a closed subset of the Cantor set, are very limited. Here, we 
would like to
replace the dynamics $(X, \varphi)$ with $(\tilde{X}, \tilde{\varphi})$, 
without changing the associated $K$-theory, but allowing us
to find more interesting spaces $Y$ inside of 
$[0, 1]^{n} \cong \pi^{-1} \{ x\}$. These $C^*$-algebraic
 applications can be found in \cite{DPS:minDSKth}.

Our construction and proof follow those of Gjerde and
 Johansen in \cite{FloGjeJohSys} quite closely and, in turn, their proof is
 quite similar to Floyd's original one \cite{floyd1949}. 
 One added feature here is that we use the framework of 
 iterated function systems, as this allows us to replace
  the interval, $[0,1]$, with the more complicated spaces. 

Following usual conventions (see for example \cite{Hut:FraSelfSim}) an 
\emph{iterated function system}
consists of a metric space, $(C, d_{C})$, and 
 $\mathcal{F}$,  a finite collection
of maps $f: C \rightarrow C$ with the property that
there is a constant $0 < \lambda < 1$
 such that  $d_{C}(f(x), f(y)) \leq \lambda d_{C}(x,y)$, 
for all $x, y$ in $C$ and $f$ in $\mathcal{F}$.
 In particular, each map is 
continuous.
We will require a few extra properties.

\begin{defn}
\label{1-10}
Let $(C,d_{C}, \mathcal{F})$ be an iterated function system.
We say it is \emph{compact}
 if the metric space $(C, d_{C})$ is compact. We also say it is 
 \emph{invertible} if
\begin{enumerate}
\item each $f$ in $\mathcal{F}$ is injective, and
\item $\cup_{f \in \mathcal{F}} f(C) = C.$
\end{enumerate}
\end{defn}

The term ``invertible'' is meant  to indicate that each map
$f$ in $\mathcal{F}$ has an inverse, $f^{-1}:f(C) \rightarrow C$.
It is not ideal as it does not rule out the possibility that
the images of the various $f$'s  overlap.

Of course, the restriction that each map is injective
is quite important. On the other hand, it is well-known that
any compact iterated function system has a fixed point set
and the restriction of the maps to this set will satisfy 
the invariance condition  \cite[Section 3]{Hut:FraSelfSim}.

We list several simple examples of relevant iterated function systems. The 
first is the one originally used by Floyd \cite{floyd1949} along with the subsequent examples \cite{auslander1959, MR956049, FloGjeJohSys, HadJoh:AuslanderSys}.

\begin{ex}
\label{1-20}
Let $C= [0, 1]$, $f_{i}(x) = 2^{-1}(x + i)$ 
for $x$ in $[0, 1]$ and $i=0,1$, 
and $\mathcal{F} = \{ f_{0}, f_{1} \}$.
\end{ex}

The next example is a fairly simple generalization
 of the last, but it is important as this is
  the example
we need in our applications in \cite{DPS:minDSKth}.

\begin{ex}
\label{1-30}
Let $n$ be any positive integer, 
 $C=  [0, 1]^{n}$, $ f_{\delta}(x) = 2^{-1}( x + \delta)$, for 
each $x$ in $[0, 1]^{n}, \delta \in \{ 0, 1\}^{n}$ and 
$\mathcal{F} = \{ f_{\delta} \mid \delta \in \{ 0, 1 \}^{n} \}$.
\end{ex}

\begin{ex}
\label{1-40}
A minor variation on the last example would be to 
 use  instead $ f_{\delta}(x) = 3^{-1}( x + \delta)$, 
for 
each $x$ in $[0, 1]^{n}, \delta \in \{ 0, 1, 2 \}^{n}$.
 On the other hand, if we
instead let 
$\mathcal{F} =  \{ f_{\delta} \mid \delta \in \{ 0,  2\} \}$
 when $n=1$, 
or $\mathcal{F} =  
\{ f_{\delta} \mid \delta \in \{ 0, 1, 2\}^{2}, \delta \neq (1,1) \}$
for $n=2$,
this now fails the invariance condition of our definition. 
As mentioned above,
 standard results on iterated function 
systems show that $C$ contains  a unique closed set 
and restricting our maps
to that set then satisfies all the desired conditions.
Notice that when $n=1$, the set in 
question is the Cantor ternary set, 
while for $n=2$, it is the Sierpinski carpet \cite{Sie:Carpet}.
\end{ex}

Our main result is the following.

\begin{thm}
\label{1-50}
Let $(C, d_{C}, \mathcal{F})$ be a compact, invertible 
iterated function system and let 
$(X, \varphi)$ be a minimal 
homeomorphism of the Cantor set.
There exists a minimal extension, $(\tilde{X}, \tilde{\varphi})$ 
of $(X, \varphi)$ with factor map 
$\pi: (\tilde{X}, \tilde{\varphi}) \rightarrow (X, \varphi)$
such that, for each $x$ in $X$, $\pi^{-1}\{ x \}$ is a 
single point or is homeomorphic to $C$. 
Moreover, both possibilities occur.
\end{thm}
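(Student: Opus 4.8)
The plan is to construct $(\tilde X,\tilde\varphi)$ concretely using a Bratteli--Vershik model for $(X,\varphi)$, following the strategy of Gjerde and Johansen but replacing the interval by the attractor $C$ of the iterated function system. First I would invoke the Bratteli--Vershik theorem to realize $(X,\varphi)$ as the Vershik map on the infinite-path space $X_B$ of a simple, properly ordered Bratteli diagram $B$. Points of $X$ are then infinite paths $(e_1,e_2,\dots)$, and $\varphi$ sends a path to its successor in the ordering; the non-maximal/non-minimal structure of finite path-edges is exactly the combinatorial data I will exploit.

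The key idea is to ``blow up'' a single orbit using the maps in $\mathcal F$. Since the diagram has more than one edge between many vertices, at each level I can associate to each edge $e$ a choice of map $f_e\in\mathcal F$ (after first arranging, by telescoping the diagram, that each vertex is entered by exactly $|\mathcal F|$ edges, or more carefully that the edge sets carry a labelling by $\mathcal F$). To a finite path $p=(e_1,\dots,e_k)$ I would assign the contraction $f_p=f_{e_1}\circ\cdots\circ f_{e_k}\colon C\to C$, whose image is a set of diameter at most $\lambda^k\,\mathrm{diam}(C)$. For an infinite path $x=(e_1,e_2,\dots)$, the nested images $f_{(e_1,\dots,e_k)}(C)$ shrink to a point, and invertibility (condition (2), $\bigcup_f f(C)=C$) guarantees that \emph{every} point of $C$ arises this way. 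I would then define $\tilde X$ as the quotient of $X_B\times C$ (or of a suitable subset of path-symbol data) that glues $(x,c)$ to $(x',c')$ precisely when the two describe the same limiting point under the assigned contractions; concretely, $\pi^{-1}\{x\}$ collapses to a point on the ``generic'' paths and remains a full copy of $C$ on the distinguished orbit where the blow-up is performed. The map $\tilde\varphi$ is then the Vershik successor map, suitably extended across the $C$-fibre using the $f_e$'s to track how the symbolic expansion changes under the successor operation.

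The three things I must then verify are: that $\tilde\varphi$ is a well-defined homeomorphism, that $\pi$ is a factor map ($\pi\circ\tilde\varphi=\varphi\circ\pi$), and that $(\tilde X,\tilde\varphi)$ is minimal. The first two are essentially bookkeeping: continuity follows from the contraction estimate (the fibre coordinate depends continuously on the tail of the path because diameters decay geometrically), and the intertwining is built into the definition. For minimality, the plan is to show that the orbit of any point is dense; this reduces to showing that the base orbit is dense (true since $(X,\varphi)$ is minimal) together with the fact that within a single fibre $\cong C$, the return maps induced by $\tilde\varphi$ move the $C$-coordinate around densely. Here I expect to use that $\bigcup_{f\in\mathcal F}f(C)=C$ together with minimality of the base to ensure that every point of every fibre is approximated.

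The main obstacle will be arranging the gluing so that $\tilde X$ is genuinely Hausdorff and $\tilde\varphi$ is continuous across the seam where fibres change dimension, i.e.\ where a point-fibre meets a $C$-fibre. In Floyd's and Gjerde--Johansen's interval case, one uses the endpoints $0,1$ of $[0,1]$ as the ``matching'' values that glue adjacent fibres, relying on the fact that $f_0(1)=f_1(0)=\tfrac12$. For a general invertible iterated function system the images $f(C)$ may overlap in complicated ways (as the authors note, ``invertible'' does not preclude overlap), so I expect the delicate point to be specifying the identifications on the fibre consistently along maximal and minimal paths of the Bratteli diagram and checking that the resulting quotient topology makes $\tilde\varphi$ a homeomorphism rather than merely a continuous bijection. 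Showing that \emph{both} fibre types occur (the last sentence of the statement) should then be immediate from the construction: the distinguished blow-up orbit yields $C$-fibres, while off that orbit the nested-image intersection is a single point.
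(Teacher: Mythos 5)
Your overall frame (Bratteli--Vershik model, labelling edges by maps of the iterated function system, contraction estimates for point fibres, cofinality for minimality) matches the paper, but there are two genuine gaps, and they sit precisely where the content of the theorem lies. First, you label edges only by elements of $\mathcal{F}$, all of which contract by the factor $\lambda$; then for \emph{every} infinite path $x$ the nested images $f_{x_1}\circ\cdots\circ f_{x_k}(C)$ have diameter at most $\lambda^{k}\,{\rm diam}(C)$ and shrink to a single point, so every fibre of your $\pi$ is a singleton and no fibre homeomorphic to $C$ can ever occur. You assert that a ``distinguished blow-up orbit'' retains a full copy of $C$, but nothing in your construction produces it. The paper's key device is to label edges by $\mathcal{F}\cup\{\id_{C}\}$ and to impose three conditions: maximal and minimal edges receive non-identity labels; for each vertex $v$ the images $f_{e}(C)$, over non-identity-labelled edges $e$ with $s(e)=v$, cover $C$; and the identity-labelled edges contain an infinite path. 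The paths eventually labelled by $\id_{C}$ (``Type 2'') are exactly those whose fibres are copies of $C$, and each condition is used later: the first makes the successor map well defined at $x^{\max}$, the second drives the minimality argument, and the third guarantees that $C$-fibres actually occur, i.e.\ the final sentence of the theorem.

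Second, your construction of $\tilde{X}$ as a \emph{quotient} of $X_{B}\times C$, gluing $(x,c)$ to $(x',c')$ ``when they describe the same limiting point,'' is not well defined: the limiting point of a path does not depend on the fibre coordinate at all, so this relation identifies the whole product down to (essentially) a copy of $C$ and loses the base dynamics; and it is exactly this quotient step that creates the Hausdorff/seam problem you name as the main obstacle---which your proposal then leaves unresolved. The paper avoids quotients altogether: it sets $\tilde{X}_{n}=\{(x,c)\in X_{E}\times C \mid c\in f_{x_{1}}\circ\cdots\circ f_{x_{n}}(C)\}$ and $\tilde{X}=\bigcap_{n}\tilde{X}_{n}$, a nested intersection of closed subsets of the compact metric space $X_{E}\times C$. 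Hausdorffness is then automatic, the fibre dichotomy (singleton for Type 1, a homeomorphic copy of $C$ for Type 2) follows directly from the contraction estimate, and the remaining work is to define $\tilde{\varphi}$ on the fibre coordinate by $f_{y_{1}}\circ\cdots\circ f_{y_{n}}\circ f_{x_{n}}^{-1}\circ\cdots\circ f_{x_{1}}^{-1}$ (this is where invertibility of the maps is used), to check continuity on clopen cylinder pieces, and to prove minimality using the covering condition above. Your worry about overlapping images $f(C)$ is a red herring in this formulation: since no identifications are ever made, overlaps cause no harm.
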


\begin{thm}
\label{1-60}
Let $(C, d_{C}, \mathcal{F})$ be a compact, invertible 
iterated function system and let 
$(X, \varphi)$ be a minimal 
homeomorphism of the Cantor set.
If $C$ is contractible, then the minimal extension $(\tilde{X}, \varphi)$ and the factor map
$\pi: (\tilde{X}, \tilde{\varphi}) \rightarrow (X, \varphi)$ 
may be chosen so that 
\[
\pi^{*}: K^{*}(X) \rightarrow K^{*}(\tilde{X})
\]
is an isomorphism and so that 
$\pi$ induces a bijection between the respective sets
of invariant measures.
\end{thm}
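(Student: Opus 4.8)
The plan is to run the construction used to prove Theorem~\ref{1-50} and then extract the two additional properties by controlling the set of non-trivial fibres. Write $E = \{ x \in X \mid \pi^{-1}\{x\} \cong C \}$. Because only countably many points are blown up into copies of $C$ (a single $\varphi$-orbit in the basic construction), the set $E$ is countable, and off $E$ the map $\pi$ restricts to a homeomorphism onto $\tilde X \setminus \pi^{-1}(E)$. Since $C$ is assumed contractible, every fibre of $\pi$ is contractible: the singleton fibres trivially, and the fibres over $E$ because they are homeomorphic to $C$. This contractibility is the only place the extra hypothesis enters.

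For the $K$-theory statement I would realise $\tilde X$ as an inverse limit of partial blow-ups. Enumerate $E = \{x_1, x_2, \dots\}$ and let $Z_k$ be the space obtained from $X$ by replacing only $x_1, \dots, x_k$ by copies of $C$, with the topology inherited from the construction of Theorem~\ref{1-50}. Collapsing the newest inserted copy of $C$ gives bonding maps $b_k \colon Z_{k+1} \to Z_k$, and collapsing all inserted copies gives maps $c_k \colon Z_k \to X$; one checks that the induced map $\tilde X \to \varprojlim_k Z_k$ is a homeomorphism and that $\pi = c_k \circ p_k$, where $p_k \colon \tilde X \to Z_k$ is the canonical projection. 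The crucial point is that each $c_k$ is a homotopy equivalence: it collapses the finitely many disjoint contractible compacta $\pi^{-1}\{x_1\}, \dots, \pi^{-1}\{x_k\}$ to points, and the geometry of the construction makes each of these a nicely embedded (cofibred) subspace, so collapsing it does not change the homotopy type. Consequently each $c_k^* \colon K^*(X) \to K^*(Z_k)$ is an isomorphism, and the relations $c_k \circ b_k = c_{k+1}$ identify the direct system $(K^*(Z_k), b_k^*)$ with the constant system on $K^*(X)$ with identity maps. By continuity of $K$-theory for the inductive limit $C(\tilde X) = \varinjlim_k C(Z_k)$ we obtain $K^*(\tilde X) = \varinjlim_k K^*(Z_k) = K^*(X)$, and chasing the factorisations $\pi = c_k \circ p_k$ shows that this isomorphism is exactly $\pi^*$.

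For the invariant measures, recall that $(X,\varphi)$ is minimal and infinite, so it has no periodic points and every $\varphi$-invariant Borel probability measure is non-atomic; as $E$ is countable, $\nu(E) = 0$ for every such $\nu$. Given a $\tilde\varphi$-invariant measure $\tilde\mu$ on $\tilde X$, its image $\nu = \pi_* \tilde\mu$ is $\varphi$-invariant and $\tilde\mu(\pi^{-1}(E)) = \nu(E) = 0$, so $\tilde\mu$ is concentrated on $\tilde X \setminus \pi^{-1}(E)$, where $\pi$ is a homeomorphism onto $X \setminus E$; hence $\tilde\mu$ is determined by $\nu$, giving injectivity of $\pi_*$. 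Conversely, transporting any invariant $\nu$ through the homeomorphism $X \setminus E \to \tilde X \setminus \pi^{-1}(E)$ and extending by zero produces a $\tilde\varphi$-invariant measure pushing forward to $\nu$, giving surjectivity. Since $\pi_*$ is affine and weak-$*$ continuous, it is the desired bijection of invariant measure simplices.

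The main obstacle I expect is the homotopy-theoretic heart of the first part: verifying that each collapse $c_k \colon Z_k \to X$ is a homotopy equivalence (equivalently, that the inserted copies of $C$ are nicely embedded in $\tilde X$), together with the identification $\tilde X \cong \varprojlim_k Z_k$ at the level of the explicit model. This is precisely where contractibility of $C$ is used; without it the inserted fibres would carry non-trivial reduced $K$-theory and $\pi^*$ would fail to be an isomorphism. The measure-theoretic statement, by contrast, uses only that the exceptional set $E$ is countable and is essentially formal once the construction is in hand.
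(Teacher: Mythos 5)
Your measure-theoretic paragraph is correct and is essentially the paper's own argument: one chooses the labelling so that the identity-labelled edges form a single infinite path, the exceptional set $E$ is then one cofinality class (in particular countable), it is null for every invariant measure, and $\pi$ is injective off $\pi^{-1}(E)$. The fatal problem is in the $K$-theory half: the intermediate spaces $Z_k$ on which your whole argument rests do not exist. The set $E$ is dense in $X$ (it contains an orbit of a minimal system), and in the construction of Theorem \ref{1-50} the fibres over points of $E$ accumulating at a fixed $x_1 \in E$ do \emph{not} shrink. Concretely, write $\pi^{-1}\{x_1\} = \{x_1\} \times g(C)$ with $g = f_{(x_1)_1} \circ \cdots \circ f_{(x_1)_N}$ as in Lemma \ref{2-40}, and let $(e_1, e_2, \dots)$ be the identity-labelled path. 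For each large $j$ choose a path $y_j$ that agrees with $x_1$ up to level $j-1$, uses at level $j$ an edge $w_j \neq e_j$ parallel to $e_j$ (such edges exist after the telescoping built into the construction, and their labels lie in $\mathcal{F}$, hence are strict contractions), and agrees with $(e_1,e_2,\dots)$ afterwards. Then $y_j \in E$, $y_j \to x_1$, and $\pi^{-1}\{y_j\} = \{y_j\} \times g(f_{w_j}(C))$; passing to a subsequence we may take $f_{w_j} = f$ fixed, since $\mathcal{F}$ is finite. Pick $a' \neq b'$ in $g(f(C))$ (possible unless $C$ is a point, in which case the theorem is trivial) and set $a_j = (y_j, a')$, $b_j = (y_j, b')$. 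These lie in $\tilde{X}$, lie in a common fibre that your projection $p_k \colon \tilde{X} \to Z_k$ must collapse (for large $j$, $y_j \notin \{x_1,\dots,x_k\}$), and converge to the \emph{distinct} points $a = (x_1,a')$ and $b = (x_1,b')$ of $\pi^{-1}\{x_1\}$. If $Z_k$ were Hausdorff and $p_k$ continuous, uniqueness of limits would give $p_k(a) = \lim_j p_k(a_j) = \lim_j p_k(b_j) = p_k(b)$, contradicting the fact that the fibre over $x_1$ must survive injectively in $Z_k$ (this injectivity is forced by your requirement $\tilde{X} \cong \varprojlim_k Z_k$). So already $Z_1$ admits no Hausdorff topology compatible with a continuous projection from $\tilde{X}$: a minimal system does not permit blowing up finitely many points of a dense exceptional set. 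Note also that the risk you flagged (cofibrations) is not the issue: for $C^*$-algebraic $K$-theory, collapsing a closed contractible subspace always preserves $K^*$, by excision and the five lemma; it is the spaces themselves that are missing.

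The paper's proof sidesteps this by approximating $\tilde{X}$ from the outside rather than blowing up points one at a time. It uses the nested compact spaces $\tilde{X}_n$ already present in the construction, with $\tilde{X} = \bigcap_n \tilde{X}_n$ realized as the inverse limit of the inclusions $\tilde{X}_1 \supseteq \tilde{X}_2 \supseteq \cdots$. Each $\tilde{X}_n$ is a finite disjoint union of the clopen sets $\tilde{X}_n(p) = X(p) \times f_{p_1} \circ \cdots \circ f_{p_n}(C)$ over paths $p$ of length $n$, and each factor $f_{p_1} \circ \cdots \circ f_{p_n}(C)$ is contractible, so the coordinate projection $\pi_n \colon \tilde{X}_n \to X_E$ induces an isomorphism on $K^*$; continuity of $K$-theory applied to $C(\tilde{X}) = \varinjlim_n C(\tilde{X}_n)$ then gives that $\pi^*$ is an isomorphism. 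The essential difference is that at every finite stage \emph{all} fibres of $\pi_n$ are contractible, so no exceptional point is ever treated in isolation against the dense set of the others. If you wish to salvage your outline, replace your $Z_k$ by the paper's $\tilde{X}_n$; your measure argument can then stand as written.
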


\section{The construction and proofs} \label{Sec:ConstProofs}

Just as for Gjerde and Johansen  \cite{FloGjeJohSys}, we make critical use
of the Bratteli--Vershik model for minimal systems on the Cantor set \cite{Put:BookMinCantorSys}. Briefly, the Bratteli-Vershik model takes
some simple combinatorial data (an ordered Bratteli diagram)
and produces a minimal homeomorphism of the Cantor set. In fact, 
every minimal homeomorphism of the Cantor set is produced in 
this way.
A standard reference for Cantor minimal system is \cite{Put:BookMinCantorSys}, in particular see Chapters 4 and 5 for more on the Bratteli--Vershik model. 

We begin with a Bratteli diagram, $(V, E)$, consisting
of a vertex set $V$ written as a disjoint union of finite, non-empty
sets $V_{n}, n \geq 0$, with $V_{0} = \{ v_{0} \}$, and  an edge set
written as a disjoint union of finite, non-empty
sets $E_{n}, n \geq 1$. Each edge $e$ in $E_{n}$ has a 
source, $s(e)$, in $V_{n-1}$ and range, $r(e)$, in $V_{n}$.
We may assume (see \cite{Put:BookMinCantorSys}) that our diagram has 
\emph{full edge connections}, that is, every pair of vertices from 
adjacent levels is connected by at least one edge. We define the 
space $X_{E}$ to consist of all infinite paths in the 
diagram, beginning at $v_{0}$. That is, a point
 $x = (x_{1}, x_{2}, \ldots )$, 
$x_{n} \in E_{n}, r(x_{n}) = s(x_{n+1})$. This space
 is endowed with the metric
 \[
 d_{E}(x,y) = \inf \{ 2^{-n} \mid n \geq 0, 
 x_{i} = y_{i}, 1\leq i \leq n \}.
 \]
 
 In addition, we may assume that the edge set $E$ is endowed with an order 
 such that two edges $e,f$ are comparable if and only if $r(e) =r(f)$.
The set of maximal edges and the set of minimal edges each form a tree
and we  assume that our diagram is \emph{properly ordered}, meaning that
each contains exactly one infinite path. Two finite paths from $v_{0}$
to $V_{n}$ can be compared if they have the same range vertex by
 using a right-to-left lexicographic order. 
 Infinite paths may be compared in a similar way: two
  paths are \emph{cofinal} if they differ in only finitely many entries and
  can be compared using a right-to-left lexicographic order. 
  The Bratteli--Vershik map, $\varphi_{E}$, takes an infinite path to its
  successor, and the unique infinite path with all  edges maximal to
  the unique infinite path with all  edges minimal. The system $(X_{E}, \varphi_{E})$
  is a minimal Cantor system (provided $X_{E}$ is infinite).
   Moreover, every minimal Cantor system
  is topologically conjugate to a Bratteli--Vershik system.
In view of this, we may assume that $(X, \varphi) = (X_{E}, \varphi_{E})$, 
for some properly ordered Bratteli diagram, $(V, E)$.

We note that there is an (essentially) unique ordered Bratteli diagram with
$\# V_{n} =1$ and $\# E_{n} = 3$, for all $n \geq 1$, and the associated
Bratteli--Vershik map is the $3^{\infty}$-odometer considered by
Floyd. More generally, an odometer is any system with
$\# V_{n} =1$, for all $n \geq1$ (see for example \cite[Chapter 11 Section 8]{Put:BookMinCantorSys}).

Recall that, in addition to the Cantor minimal system, we also
 have $(C, d_{C}, \mathcal{F})$, which is a compact, invertible 
iterated function system. Our final ingredient involves this system. 
To each 
edge $e$ in $E$, we assign a function, denoted $f_{e}$, 
in $\mathcal{F} \cup \{ \id_{C} \}$, 
where $id_{C}$ is the identity function on $C$. 
We assume that this assignment satisfies the following three
conditions:
\begin{enumerate}
\item if $e$ is either maximal or minimal, then $f_{e} \neq \id_{C}$,
\item
  for every $v$ in $V$, we have 
 $\cup_{s(e) =v, f_{e} \neq \id_{C}} f_{e}(C) = C,$
\item the set 
$\{ e \in E \mid f_{e} = \id_{C} \}$ contains an infinite path.
\end{enumerate}

Let us first mention that the following weaker third 
condition 
will suffice: there exists an infinite path $(e_{1}, e_{2}, \ldots)$ such 
that $f_{e_{n}} = \id_{C}$, for infinitely many $n$.
 Secondly, the fact that we can find a properly
 ordered Bratteli diagram satisfying these can be seen as follows.
 First, take any minimal Bratteli--Vershik 
 $(X_{E}, \varphi_{E})$ with $X_{E}$ infinite.
  It can be telescoped until every pair of vertices
 at adjacent levels have at least $\# \mathcal{F} +1$ edges (see
  \cite[page 94]{FloGjeJohSys} or \cite[page 22--23]{Put:BookMinCantorSys}).
 Select some infinite path, $(e_{1}, e_{2}, \ldots )$,
 which avoids all maximal and minimal edges and set
 $f_{e_{n}}= \id_{C}$ for all $n \geq 1$. 
 Then $f$ may be chosen so that it maps the  set
 $s^{-1}\{ v\} - \{ e_{1},  e_{2}, \ldots \}$
  surjectively to $\mathcal{F}$, for every $v $ in $V$.

In the construction of Gjerde and Johansen \cite{FloGjeJohSys}, the 
edges $e$ with $f_{e} = \id_{C}$
 form a single infinite path. 
On the other hand, 
allowing more general subdiagrams, we capture examples such 
as those given in \cite{HadJoh:AuslanderSys}.

We now  construct the system $(\tilde{X}, \tilde{\varphi})$. We endow 
$X_{E} \times C$ with 
the metric
\[
d\left( (x,c), (y, d) \right) = \max \{ d_{E}(x,y), d_{C}(c,d) \},
\]
for $x,y$ in $X_{E}$ and $c, d $ in $C$.
For each $n \geq 1$, define
\[
\tilde{X}_{n} = \{ (x, c) \in X_{E} \times C \mid 
c \in f_{x_{1}} \circ \cdots \circ f_{x_{n}}(C) \}.
\]
It is immediate that each $\tilde{X}_{n}$ is 
closed and non-empty   and that $\tilde{X}_{n} \supseteq \tilde{X}_{n+1}$.
 We let $\tilde{X} = \cap_{n \geq 1} \tilde{X}_{n}$, 
 which is also closed and non-empty.

The quotient map $\pi: \tilde{X} \rightarrow X_{E}$ is defined by
$\pi(x,c) = x$.

\begin{lemma}
\label{2-40}
Suppose that $x \in X_{E}$. Then exactly one of the following hold:
\begin{enumerate}
\item Type 1: for infinitely many $n \geq 1$, $f_{x_{n}} \neq \id_{C}$,
\item
Type 2: there exists $n \geq 1$ such that 
$f_{x_{i}} = \id_{C}$, for all $i \geq n$.
\end{enumerate}
Moreover, in the Type 1 case,  $ \pi^{-1}\{ x \} $ consists of a single point, which we 
 denote by $(x, c_{x})$. 
In the Type 2 case, we have 
 \[
 \pi^{-1}\{ x \} = \{ x \} \times 
 f_{x_{1}} \circ \cdots \circ f_{x_{n}}(C)
 \]
 which is homeomorphic to $C$, since each $f_{e}$ is continuous and injective.
\end{lemma}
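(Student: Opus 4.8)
The plan is to treat the dichotomy and the fibre descriptions separately, since the first is essentially tautological while the real content is a diameter estimate driven by the contraction hypothesis. The dichotomy is immediate: the set $N(x) = \{ n \geq 1 \mid f_{x_{n}} \neq \id_{C} \}$ is a subset of $\N$, hence either infinite (Type~1) or finite (Type~2), and these are mutually exclusive and exhaustive. In the Type~2 case any upper bound for $N(x)$, increased by one, provides the required index $n$ with $f_{x_{i}} = \id_{C}$ for all $i \geq n$.

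Next I would record the shape of a general fibre. Writing $K_{n}(x) = f_{x_{1}} \circ \cdots \circ f_{x_{n}}(C)$, the definition of $\tilde{X}$ as $\cap_{n} \tilde{X}_{n}$ gives $\pi^{-1}\{ x \} = \{ x \} \times \bigcap_{n \geq 1} K_{n}(x)$. Since every map in $\mathcal{F} \cup \{ \id_{C} \}$ sends $C$ into $C$, the sets $K_{n}(x)$ are nested, $K_{n+1}(x) \subseteq K_{n}(x)$, and each is a nonempty compact set, being a continuous image of the compact set $C$. Hence the intersection is a nonempty compact set by the finite intersection property.

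For Type~2, once $f_{x_{i}} = \id_{C}$ for all $i \geq n$, the sequence $K_{m}(x)$ is constant for $m \geq n$, so the intersection equals $f_{x_{1}} \circ \cdots \circ f_{x_{n}}(C)$, as claimed. Each factor is injective (the maps from $\mathcal{F}$ by invertibility condition~(1), and $\id_{C}$ trivially), so the composition is a continuous injection of the compact metric space $C$ into itself, and therefore a homeomorphism onto its image. This yields $\pi^{-1}\{ x \} \cong C$.

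The substance lies in Type~1, and this is the step I expect to require the most care. The mechanism is that each non-identity map shrinks diameters: if $f$ is a $\lambda$-contraction then $\operatorname{diam}(f(S)) \leq \lambda \operatorname{diam}(S)$ for any $S \subseteq C$, whereas $\operatorname{diam}(\id_{C}(S)) = \operatorname{diam}(S)$. Propagating this through the composition and letting $m(n) = \#\big( N(x) \cap \{ 1, \ldots, n \} \big)$ count the non-identity factors, I would obtain
\[
\operatorname{diam}(K_{n}(x)) \leq \lambda^{m(n)} \operatorname{diam}(C).
\]
In Type~1 one has $m(n) \to \infty$, so $\operatorname{diam}(K_{n}(x)) \to 0$, and a nested sequence of nonempty compact sets with diameters tending to $0$ intersects in a single point, giving $\pi^{-1}\{ x \} = \{ (x, c_{x}) \}$. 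The only delicate points are to verify that the contraction factor accumulates correctly along the inside-out composition and that the interspersed identity maps, being merely nonexpansive, cause no harm---which is precisely why it is the count $m(n)$, rather than $n$, that appears in the exponent.
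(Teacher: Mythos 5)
Your proof is correct and takes essentially the same route as the paper's: the same tautological dichotomy, the same estimate $\operatorname{diam}(f_{x_{1}} \circ \cdots \circ f_{x_{n}}(C)) \leq \lambda^{m}\operatorname{diam}(C)$ with $m$ counting the non-identity maps (giving a single point in the Type~1 case), and the same observation that in the Type~2 case the nested images stabilize so the intersection is $f_{x_{1}} \circ \cdots \circ f_{x_{n}}(C)$, homeomorphic to $C$ since the maps are continuous and injective. You merely spell out steps the paper compresses into ``the conclusion follows.''
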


\begin{proof}
The set of $n$ such that $f_{x_{n}} = \id_{C}$ is either finite or
infinite, hence the two conditions are mutually 
exclusive and the only possibilities.

 In the first case,  we have 
 \[
 {\rm diam}( f_{x_{1}} \circ \cdots \circ f_{x_{n}}(C)) \leq 
  \lambda^{m}  {\rm diam}(C),
 \]
 where $m$ is the number of $1 \leq l \leq n$ with $f_{x_{l}} \neq \id_{C}$.
 The conclusion follows.

In the second case, it is clear that
\[
\left( \{ x \} \times C \right) \cap \tilde{X}_{i} =
 \{ x \} \times f_{x_{1}} \circ \cdots \circ f_{x_{n}}(C),
 \]
 for any $i \geq n$ and now, taking the intersection over all $i$, we obtain
 the desired conclusion.
\end{proof}

We are now prepared to define our self-map of $\tilde{X}$ and show it is a minimal homeomorphism.
Let $(x,c)$ be in $\tilde{X}$. We consider three cases separately. 
First, we 
 assume that $x$ is 
Type 1 and that $x \neq x^{\max}$. Then $x$ contains a non-maximal edge
and we let $n$ be the first such edge. Thus, we have 
$\varphi_{E}(x) = (y_{1}, \ldots, y_{n}, x_{n+1}, \ldots )$, 
for some path $y_{1}, \ldots, y_{n}$ with 
$r(y_{n}) = r(x_{n})$.
 The fact that $x$ is Type 1 means that $x$
  uniquely determines $c = c_{x}$ and 
we define $\tilde{\varphi}(x, c) = (\varphi_{E}(x), c_{\varphi_{E}(x)})$.
To see this is well-defined, it suffices to note that, as 
$x$ is Type 1 and  $\varphi_{E}(x)$  differs from $x$ in only finitely many 
entries, $\varphi_{E}(x)$ is also Type 1 and 
$c_{\varphi_{E}(x)}$ is well-defined.

Secondly, suppose that $x = x^{\max}$. It follows 
that, for all $n$,   $x_{n}$ is maximal and hence
 $f_{x_{n}} \neq \id_{C}$. 
So 
$x^{\max}$ is also Type 1 and $c$ is once again uniquely
determined as $c= c_{x}$. The same argument shows that 
$x^{\min}$ is Type 1 as well. We define
$\tilde{\varphi}(x^{\max}, c_{x^{\max}}) = (x^{\min}, c_{x^{\min}})$. 

Finally, we consider the case that $x$ is Type 2. 
In particular, this implies that $x$ is not
equal to $x^{\max}$. Let $n$ be as in the definition of 
Type 2. (Notice that such an $n$ is not unique: we deal with 
this issue shortly.)
 As mentioned, $x_{n}$ is not maximal. Hence
$\varphi_{E}(x) = (y_{1}, \ldots, y_{n}, x_{n+1}, \ldots)$, for some
path $y_{1}, \ldots, y_{n}$ with 
$r(y_{n}) = r(x_{n})$. 
(It is worth noting that it is possible that $x_{n}=y_{n}$.)
Since $c$ is in 
$f_{x_{1}} \circ \cdots \circ f_{x_{n}}(C)$ we can 
 define
\[
\tilde{\varphi}(x,c) = (\varphi_{E}(x), 
f_{y_{1}} \circ \cdots \circ f_{y_{n}} 
\circ f_{x_{n}}^{-1} \circ \cdots \circ f_{x_{1}}^{-1}(c)).
\]

As mentioned in the previous paragraph the choice of $n$
 is not unique and hence we need to check that the above
  definition is independent of the choice of 
$n$. This follows from the observation that 
if  $n'$ is another such integer, we
may assume that $n' > n$, and then all the maps
$f_{x_{n+1}}, \ldots, f_{x_{n'}}$ are all equal to $id_{C}$.

The proof that $\tilde{\varphi}$ is bijective is as follows:
by simply reversing the order on the edge set, the construction 
will yield another map which is easily seen to be
the inverse of $\tilde{\varphi}$.

We claim that $\tilde{\varphi}$ is continuous. 
To show this, 
it will be convenient to define, for any path 
$p=( p_{1}, \ldots, p_{n})$ in $(V,E)$ from $v_{0}$ 
to $V_{n}$, the sets
\[
\tilde{X}_{n}(p) =
\{ x \in X_{E}  \mid (x_{1}, \ldots, x_{n})=p \} \times  
f_{p_{1}} \circ \cdots \circ f_{p_{n}}(C).
\]
It is an easy matter to check that $\tilde{X}_{n}(p)$
  is a closed subset 
of $\tilde{X}_{n}$, that for $p \neq q$ of length $n$, 
$\tilde{X}_{n}(p)$ and  $\tilde{X}_{n}(q)$ are disjoint, and that 
the union
of all such sets over paths of length $p$ is exactly $\tilde{X}_{n}$.
From this, it follows that each such set is also an open subset
 $\tilde{X}_{n}$. It also follows that 
 $\tilde{X} \cap \tilde{X}_{n}(p)$ is clopen in $\tilde{X}$.

Let $p$ be any path in $E$ from $v_{0}$ to $V_{n}$ which is not 
maximal. Let $q$ be its successor among such paths.
Define a map $\psi: \tilde{X}_{n}(p) \rightarrow 
\tilde{X}_{n}(q) $ by 
\[
\psi(z, c) = (\varphi_{E}(z), f_{q_{1}} \circ \cdots \circ f_{q_{n}} 
\circ f_{p_{n}}^{-1} \circ \cdots \circ f_{p_{1}}^{-1}(c)),
\]
for $(z,c)$ in $\tilde{X}_{n}(p)$. Observe 
that this is clearly a homeomorphism.

 We will show 
$\psi|_{\tilde{X} \cap \tilde{X}_{n}(p)} =
\tilde{\varphi}|_{\tilde{X} \cap \tilde{X}_{n}(p)}$.
From this and the fact that  $\tilde{X} \cap \tilde{X}_{n}(p)$
is clopen in $\tilde{X}$, 
it follows that $\tilde{\varphi}$ is continuous on 
$\tilde{X} \cap \tilde{X}_{n}(p)$.

First, suppose that $z$ is a Type 2 point in 
$\tilde{X} \cap \tilde{X}_{n}(p)$. Choose $m$ such that 
$f_{z_{i}} = \id_{C}$, for all $i \geq m$,
 Without loss of generality, we may assume that
$m> n$. We know that
 $\varphi_{E}(z) = (q_{1}, \ldots, q_{n}, z_{n+1}, \ldots )$ and hence, 
 for any $c$ in $C$, 
 \begin{eqnarray*}
 \tilde{\varphi}(z,c) & =  & ((q_{1}, \ldots, q_{n}, z_{n+1}, \ldots ),
 f_{q_{1}} \circ \cdots \circ f_{q_{n}} \circ f_{z_{n+1}} \circ 
\cdots \circ  f_{z_{m}} \\
  &  &  \circ f_{z_{m}}^{-1} \circ \cdots 
f_{z_{n+1}}^{-1} \circ f_{p_{n}}^{-1}  \circ \cdots \circ 
f_{p_{1}}^{-1}(c) ) \\
  & =  & ((q_{1}, \ldots, q_{n}, z_{n+1}, \ldots ),
 f_{q_{1}} \circ \cdots 
 \circ f_{q_{n}} \circ f_{p_{n}}^{-1}  \circ \cdots \circ 
f_{p_{1}}^{-1}(c)  ) \\
  &  =  & \psi(z, c).
  \end{eqnarray*}
  
  Now, we consider a point $z$ of Type 1 in
   $\tilde{X} \cap \tilde{X}_{n}(p)$.
  The same argument as above shows that, for any $m > n$, we have
  \[
  \psi( \tilde{X} \cap \tilde{X}_{n}(p, z_{n+1}, \ldots, z_{m})  )
  =  
  \tilde{\varphi}( \tilde{X} \cap \tilde{X}_{n}(p, z_{n+1}, \ldots, z_{m})). 
  \]
 The point $\psi(z, c_{z})$ is the unique point   which lies in the left-hand
 side for every $m > n$, while $\tilde{\varphi}(z, c_{z})$ is the unique
 point that lies in the right-hand side, for every $m > n$. Hence, we
 conclude they are equal.
 
We have now shown that $\tilde{\varphi}$ is continuous on every set 
$\tilde{X}_{n}(p)$, where $p$ is a finite path which is not maximal. But 
such sets, allowing both $n$ and $p$ to vary,
contain every point of $\tilde{X}$, except $(x^{\max}, c_{x^{\max}})$. 
It follows from general topological arguments using the facts that 
$\tilde{\varphi}$ is a bijection and that $\tilde{X}$ is
compact, that $\tilde{\varphi}$ is continuous everywhere.

Finally, we need to show that $(\tilde{X}, \tilde{\varphi})$ is minimal. 
Let $(x, c)$ and $(y, d)$ be in $\tilde{X}$ and let $\epsilon > 0$.
It suffices for us to find a point $(z, e)$ in the orbit of
$(x, c)$ within distance $\epsilon$ of $(y,d)$.

We first consider the case that $y$ is of Type 1. 
We choose $n$ sufficiently
large so that $2^{-n} < \epsilon$ and so that, if we let 
$m$ be 
the number of $1 \leq l \leq n$ such that $f_{y_{l}} \neq \id_{C}$, 
then $\lambda^{m} {\rm diam}(C) < \epsilon$. It follows that
$\tilde{X} \cap \tilde{X}_{n}(y_{1}, \ldots, y_{n})$ is a clopen set
containing $(y,d)$ of diameter less than $\epsilon$.

 We define $z$ in $X_{E}$ as follows: $z_{i} = y_{i}$ for all
 $1 \leq i \leq n$, $z_{n+1}$ is any edge in $E_{n+1}$ 
 with $s(z_{n+1}) = r(y_{n})$, and $r(z_{n+1}) = s(x_{n+2})$ and 
 $z_{i} = x_{i}$, for any $i \geq n+2$. It follows at once that
$z$ and $x$ are cofinal.
 Hence, there is an integer $k$ such that $\varphi_{E}^{k}(x) = z$.
It is then clear that 
  $\tilde{\varphi}^{k}(x,c)$ is in
   $\tilde{X} \cap \tilde{X}_{n}(y_{1}, \ldots, y_{n})$
    and hence within $\epsilon$ of $(y,d)$.
    
    Now we consider the case that $y$ is of Type 2.  First,
     choose
    $m$ sufficiently large so that $2^{-m} < \epsilon$ and 
    so that  $f_{y_{n}} = \id_{C}$, for all
    $n > m$. Define $z_{i} = y_{i}$, for all $ 1 \leq i \leq m$. 
    Observe that $d $ is in $f_{y_{1}} \circ \cdots \circ f_{y_{m}}(C)$
    and we let $d_{m}= f_{y_{1}}^{-1} \circ \cdots \circ f_{y_{m}}^{-1}(d)$.
   Choose $n > m$ such that 
    $\lambda^{n-m} {\rm diam}(C) < \epsilon$. For each $ m < l \leq n$, we  define 
    $z_{l}$ and $d_{l}$ inductively,  using our second hypothesis
     on the assignment $e \rightarrow f_{e}$, so that $s(z_{l}) = r(z_{l-1})$,
     $f_{z_{l}} \neq \id_{C}$
    and $f_{z_{l}}(d_{l}) = d_{l-1}$. 
    This obviously implies that 
    $f_{z_{m+1}} \circ \cdots \circ f_{z_{n}}(d_{n})=d_{m}$.
    We define $z_{n+1}$ to be any edge with $s(z_{n+1}) = r(z_{n})$ and 
    $r(z_{n+1}) = s(x_{n+2})$ and then $z_{l} = x_{l}$, for 
    $l \geq n+2$. This means that $z$ is a  path  in $X_{E}$ 
    which is cofinal with $x$. Hence, there is an integer $k$ such that
    $\varphi^{k}(x)=z$. 
    
    We claim that $\tilde{\varphi}^{k}(x,c)$, which we denote $(z, e)$,
     is within $\epsilon $ of
    $(y,d)$. First, as $\varphi_{E}^{k}(x)_{i} = z_{i} = y_{i}$, for
    all $1 \leq i \leq n$, we have 
    $d( \varphi_{E}^{k}(x), y) < 2^{-n} \leq 2^{-m} < \epsilon$.
    This also means that $e$ is in 
    $f_{z_{1}} \circ \cdots \circ f_{z_{n}}(C)$. On the other hand, we
    know that 
    \begin{eqnarray*}
    d & = & f_{y_{1}} \circ \cdots \circ f_{y_{m}}(d_{m}) \\
       & = & f_{z_{1}} \circ \cdots \circ f_{z_{m}}(d_{m}) \\
         & = & f_{z_{1}} \circ \cdots \circ f_{z_{n}}(d_{n}) \\
         &  \in &  f_{z_{1}} \circ \cdots \circ f_{z_{n}}(C).
         \end{eqnarray*}
         We also know that, since $f_{z_{l}} \neq \id_{C}$, for 
         $ m < l \leq n$, we have
         \begin{eqnarray*}
         {\rm diam}(  f_{z_{1}} \circ \cdots \circ f_{z_{n}}(C)) & \leq  &
     {\rm diam}(  f_{z_{m+1}} \circ \cdots \circ f_{z_{n}}(C)) \\
       &  \leq   &  \lambda^{n-m}
     {\rm diam}(C) \\
       & < & \epsilon.
     \end{eqnarray*}
    This completes the proof of Theorem \ref{1-50}.
    
    We now turn to the proof of Theorem \ref{1-60}.
    Let $\pi_{n}: \tilde{X}_{n} \rightarrow X_{E}$ be the obvious extension of 
    $\pi$: simply projecting onto the first factor. 
    For any path $p$, from $v_{0}$ to $V_{n}$: $\pi_{n}$ 
    maps $\tilde{X}_{n}(p)$ to $X(p) = \{ x \in X_{E} 
    \mid x_{i} = p_{i}, 1 \leq i \leq n \}$.
    As we assume $C$ is contractible, so is
    $f_{p_{1}} \circ \cdots \circ f_{p_{n}}(C) $ and this map
    induces an isomorphism on $K$-theory. Taking the union over all
    paths $p$ of length $n$, we see that $\pi_{n}$ induces an 
    isomorphism from
     $K^{*}(\tilde{X}_{n} ) \cong \oplus_{p} K^{*}(\tilde{X}_{n}(p) ) $ 
     to $K^{*}(X_{E} ) \cong \oplus_{p} K^{*}(X(p) )$.
    
    As $\tilde{X} = \cap_{n} \tilde{X}_{n} $, it is also the 
    inverse limit of 
    \[
    \tilde{X}_{1  } \leftarrow   \tilde{X}_{2 } \leftarrow   \tilde{X}_{2 } 
     \leftarrow   \cdots
     \]
     where the maps are the inclusions. As $K$-theory is continuous, the 
     conclusion follows.
     
     If we choose our assignment $e \rightarrow f_{e}$ so that 
     the edges which are assigned $\id_{C}$ form a single 
     infinite path, then $\pi$ is a bijection, except on a single orbit of
     $\varphi_{E}$. This orbit has measure zero under
     any invariant probability measure on $X$, so it lifts to
     a unique $\tilde{\varphi}$-invariant measure on $\tilde{X}$.

\subsection*{Acknowledgements}  
The authors thank the Banff International Research Station and the organisers of the workshop 
Future Targets in the Classification Program for Amenable $\mathrm{C}^*$-Algebras where this project was initiated.
Thanks also to the Department of Mathematics and Statistics at the University of Victoria and the Department of Mathematics of the University of Colorado Boulder for research visits facilitating this collaboration. Work on the project was also facilitated by the Lorentz Center where the first and third authors attended a conference on Cuntz--Pimsner algebras in June 2018. The authors are also grateful to the referee for a thorough reading of an earlier version of the paper and many helpful suggestions.


\begin{thebibliography}{10}

\bibitem{auslander1959}
Joseph Auslander.
\newblock Mean-{L}-stable systems.
\newblock {\em Illinois J. Math.}, 3(4):566--579, 12 1959.

\bibitem{MR956049}
Joseph Auslander.
\newblock {\em Minimal flows and their extensions}, volume 153 of {\em
  North-Holland Mathematics Studies}.
\newblock North-Holland Publishing Co., Amsterdam, 1988.
\newblock Notas de Matem\'{a}tica [Mathematical Notes], 122.

\bibitem{DPS:minDSKth}
Robin~J. Deeley, Ian~F. Putnam, and Karen~R. Strung.
\newblock Constructions in minimal amenable dynamics and applications to the
  classification of {$\mathrm{C}^*$}-algebras.
\newblock {P}reprint, 2019.

\bibitem{Ell:classprob}
George~A. Elliott.
\newblock {The classification problem for amenable $C^*$-algebras}.
\newblock In {\em Proceedings of the International Congress of Mathematicians,
  Z\"urich, 1994}, volume 1,2, pages 922--932. Birkh\"auser, Basel, 1995.

\bibitem{FatHer:Diffeo}
Albert Fathi and Michael~R. Herman.
\newblock Existence de diff\'{e}omorphismes minimaux.
\newblock pages 37--59. Ast\'{e}risque, No. 49, 1977.

\bibitem{floyd1949}
Edwin~E. Floyd.
\newblock A nonhomogeneous minimal set.
\newblock {\em Bull. Amer. Math. Soc.}, 55(10):957--960, 10 1949.

\bibitem{FloGjeJohSys}
Richard Gjerde and {\O}rjan Johansen.
\newblock {$\mathrm{C}^*$}-algebras associated to non-homogeneous minimal
  systems and their {K}-theory.
\newblock {\em Math. Scand.}, 85(1):87--104, 1999.

\bibitem{HadJoh:AuslanderSys}
Kamel~N. Haddad and Aimee S.~A. Johnson.
\newblock Auslander systems.
\newblock {\em Proc. Amer. Math. Soc.}, 125(7):2161--2170, 1997.

\bibitem{MR1194074}
Richard~H. Herman, Ian~F. Putnam, and Christian~F. Skau.
\newblock Ordered {B}ratteli diagrams, dimension groups and topological
  dynamics.
\newblock {\em Internat. J. Math.}, 3(6):827--864, 1992.

\bibitem{Hut:FraSelfSim}
John~E. Hutchinson.
\newblock Fractals and self-similarity.
\newblock {\em Indiana Univ. Math. J.}, 30(5):713--747, 1981.

\bibitem{Put:Excision}
Ian~F. Putnam.
\newblock {An excision theorem for the {$K$}-theory of {$C^*$}-algebras}.
\newblock {\em {J. Operator Theory}}, {38}({1}):{151--171}, {1997}.

\bibitem{Put:BookMinCantorSys}
Ian~F. Putnam.
\newblock {\em Cantor minimal systems}, volume~70 of {\em University Lecture
  Series}.
\newblock American Mathematical Society, Providence, RI, 2018.

\bibitem{MR584266}
Jean Renault.
\newblock {\em A groupoid approach to {$C^{\ast} $}-algebras}, volume 793 of
  {\em Lecture Notes in Mathematics}.
\newblock Springer, Berlin, 1980.

\bibitem{Sie:Carpet}
Wac{\l}aw Sierpi\'{n}ski.
\newblock {Sur une courbe \textit{cantor}ienne qui contient une image
  biunivoque et continue de toute courbe donn\'ee.}
\newblock {\em {C. R. Acad. Sci., Paris}}, 162:629--632, 1916.

\end{thebibliography}
\end{document}